\newcolumntype{C}[1]{>{\centering\hspace{0pt}}p{#1}}
\newcommand{\GL}{\mathrm{GL}}
\newcommand{\SO}{\mathrm{SO}}
\newcommand{\Spin}{\mathrm{Spin}}
\newcommand{\SU}{\mathrm{SU}}
\newcommand{\G}{\mathrm{G}}
\newcommand{\Id}{\mathrm{Id}}
\newcommand{\Cl}{\mathrm{Cl}}
\newcommand{\cl}{\mathrm{cl}}
\newcommand{\Dirac}{\slashed{D}}
\newcommand{\Spinor}{\slashed{S}}
\newcommand{\Sym}{\mathrm{Sym}}
\newcommand{\Z}{\mathbb{Z}}
\newcommand{\R}{\mathbb{R}}
\newcommand{\C}{\mathbb{C}}
\newcommand{\HH}{\mathbb{H}}
\newcommand{\CP}{\mathbb{CP}}
\newcommand{\HP}{\mathbb{HP}}
\newcommand{\vol}{\mathrm{vol}}
\newcommand{\SFF}{\mathrm{I\!I}}
\newtheorem{thm}{Theorem}[section]
\newtheorem{prop}[thm]{Proposition}
\newtheorem{lem}[thm]{Lemma}
\newtheorem{cor}[thm]{Corollary}
\theoremstyle{definition}
\newtheorem{defn}[thm]{Definition}
\newtheorem{rmk}[thm]{Remark}
\numberwithin{equation}{section}
\title{A Spinorial Hopf Differential for \\ Associative Submanifolds}
\author{Gavin Ball, Jesse Madnick}
\date{May 2023}
\newcommand{\Addresses}
{{  \bigskip
		\textsc{University of Wisconsin-Madison} \par\nopagebreak
		\textsc{Madison, WI} \par\nopagebreak
		\texttt{gball3@wisc.edu} \\
			
		\bigskip
		\textsc{University of Oregon} \par\nopagebreak
		\textsc{Eugene, OR}\par\nopagebreak
		\texttt{jmadnick@uoregon.edu}
}}
\begin{document}

\maketitle

\begin{abstract}
Given a CMC surface in $\R^3$, its traceless second fundamental form can be viewed as a holomorphic section called the Hopf differential.  By analogy, we show that for an associative submanifold of a 7-manifold $M^7$ with $\G_2$-structure, its traceless second fundamental form can be viewed as a twisted spinor.  Moreover, if $M$ is $\R^7$, $T^7$, or $S^7$ with the standard $\G_2$-structure, then this twisted spinor is harmonic.  Consequently, every non-totally-geodesic associative $3$-fold in $\R^7$, $T^7$, and $S^7$ admits non-vanishing harmonic twisted spinors.  Analogous results hold for special Lagrangians in $\R^6$ and $T^6$, coassociative $4$-folds in $\R^7$ and $T^7$, and Cayley $4$-folds in $\R^8$ and $T^8$.
\end{abstract}


\section{Introduction}

\indent \indent In minimal surface theory, Hopf's Theorem states that every immersed constant mean curvature (CMC) $2$-sphere in $\R^3$ is congruent to the round sphere.  Indeed, for any immersed orientable surface $u \colon \Sigma^2 \to \R^3$, its second fundamental form $\SFF$ can be split into traceless and trace parts as follows:
\begin{equation} \label{eq:SFFSplit}
\SFF = \SFF_0 + H\,\Id.
\end{equation}
Viewing $\Sigma$ as a Riemann surface, the complexification $\SFF_0^\C$ is a smooth section of $\Lambda^{1,0}(\Sigma) \otimes \Lambda^{1,0}(\Sigma)$ called the \emph{Hopf differential}, and is a holomorphic section if and only if $H$ is constant.  Consequently, if $u \colon \CP^1 \to \R^3$ is an immersed CMC $2$-sphere, then $\SFF_0^\C$ is a holomorphic section of $\Lambda^{1,0}(\CP^1)^{\otimes 2}$, a bundle which admits no non-vanishing holomorphic sections, so that $\SFF_0^\C = 0$, implying that $u \colon \CP^1 \to \R^3$ is congruent to the round sphere. \\
\indent Generalizations and analogues of the Hopf differential have since been discovered in various other settings.  For example, Abresh-Rosenberg \cite{AbreschRosenbergHopf} introduced a holomorphic quadratic form for CMC surfaces in $S^2 \times \R$ and $H^2 \times \R$.  As another example, Chern-Wolfson \cite{ChernWolfsonMoving} and Eells-Wood \cite{EellsWoodHarmonic} independently found a holomorphic cubic form for minimal surfaces in $\CP^n$ and $\HP^n$. \\

\indent In this note, we consider whether there exists a $\G_2$-geometric analogue of the above ideas.  That is, suppose $(M^7, \varphi)$ is a $7$-manifold with a $\G_2$-structure $\varphi$.  Given an associative submanifold $\Sigma^3 \subset M^7$, can its complexified traceless second fundamental form $\SFF_0^\C$ be viewed as a Hopf differential? \\
\indent On a $7$-manifold $M$, we recall that a \emph{$\G_2$-structure} is a $3$-form $\varphi \in \Omega^3(M)$ with the property that at each $x \in M$, the symmetric bilinear form $B_\varphi \colon \Sym^2(T_x^*M) \to \Lambda^7(T_x^*M)$ given by $B_\varphi(v,w) = (\iota_v\varphi) \wedge (\iota_w\varphi) \wedge \varphi$ is definite.  It is well-known that $M$ admits a $\G_2$-structure if and only if $M$ is orientable and spinnable.  An \emph{associative submanifold} of $(M, \varphi)$ is an oriented $3$-dimensional submanifold $\Sigma^3 \subset M^7$ satisfying the first-order PDE
$$\left.\varphi\right|_\Sigma = \vol_\Sigma.$$
If $\varphi$ is \emph{closed} (i.e., $d\varphi = 0$) or \textit{nearly-parallel} (i.e., $d\varphi = \lambda \ast\! \varphi$ for some constant $\lambda \neq 0$), then it is well-known that every associative $3$-fold in $(M, \varphi)$ is a minimal submanifold (i.e., $\SFF = \SFF_0$).  The converse was proven in \cite{BallMadnickExcept}. \\

\indent Taken literally, of course, the idea that $\SFF_0^\C$ could be a holomorphic section is absurd.  Indeed, since associative submanifolds are $3$-dimensional, they cannot be complex manifolds, and so one cannot speak of holomorphic vector bundles over $\Sigma$.  However, we will show that $\SFF_0^\C$ satisfies an analogue of the Cauchy-Riemann equations, namely the Dirac equation. \\
\indent In fact, there are two well-known relationships between associative submanifolds, on the one hand, and the Dirac equation, on the other.  First, suppose that $\Sigma \subset \R^7 = \R^3 \times \R^4$ is the graph of a smooth function $f \colon \R^3 \to \R^4$.  Identifying $\R^4 \simeq \HH$ and $\R^3 \simeq \text{Im}(\HH)$ with the quaternions and imaginary quaternions, Harvey and Lawson \cite[ $\S$IV.2.A.]{HarveyLawsonCalibrated} proved that $\Sigma$ is associative if and only if $f$ satisfies the PDE
$$\Dirac f = -\frac{\partial f}{\partial x_1} \times \frac{\partial f}{\partial x_2} \times \frac{\partial f}{\partial x_3},$$
where $\Dirac f = \frac{\partial f}{\partial x_1}i + \frac{\partial f}{\partial x_2}j + \frac{\partial f}{\partial x_3}k$ is the Dirac operator, and where $a \times b \times c = \star(a \wedge b \wedge c)$ is the standard triple cross product on $\R^4$. \\
\indent Second, suppose $(M^7, \varphi)$ is a $7$-manifold with a closed $\G_2$-structure, and let $\Sigma \subset M$ denote a compact associative submanifold (without boundary).  McLean proved \cite{McLeanDeformations} that under the natural isomorphism between the complexified normal bundle of $\Sigma$ and a certain twisted spinor bundle
$$N\Sigma \otimes_{\R} \C \cong \Spinor(T\Sigma) \otimes_{\C} \mathsf{V},$$
the infinitesimal associative deformations of $\Sigma$ correspond to those twisted spinors in the kernel of a Dirac operator $\Dirac$.  Analogues and extensions of McLean's Theorem are established, for example, in \cite{AkbulutSalur08}, \cite{LotayAsympConAssoc}, \cite{GayetWittDeformations}, \cite{GayetSmooth}, and \cite{MorenoSaEarpWeitzenbock}.  A related result is McLean's second variation of volume formula.  To recall it, suppose now that $\varphi$ is both closed and co-closed.  Then for any compactly-supported variation $(\Sigma_t)_{t \in (-\epsilon, \epsilon)}$ of an associative submanifold $\Sigma \subset M$, letting $\eta \in \Gamma(N\Sigma)$ denote the variation vector field, we have
$$\left. \frac{d^2}{dt^2}\right|_{t = 0} \text{Vol}(\Sigma_t) = \int_\Sigma \Vert \Dirac \eta \Vert^2\,\vol,$$
which again features the Dirac operator $\Dirac$ on the normal bundle.  This formula has been further studied in, for example, \cite{KawaiDeformationsHomog} and \cite{LeVanzuraMcLean}.

\indent Our main result establishes yet another link between associative geometry and spin geometry:

\begin{thm} \label{thm:Main} Let $(M, \varphi)$ be a $7$-manifold with a $\G_2$-structure.  Let $\Sigma^3 \subset M^7$ be an associative submanifold equipped with a choice of spin structure, let $\SFF_0 \in \Gamma(\Sym^2_0(T^*\Sigma) \otimes N\Sigma)$ denote the traceless part of its second fundamental form, and let $\SFF^\C_0$ be its complexification.
\begin{enumerate}[(a)]
\item There exists a complex vector bundle $\mathsf{E} \to \Sigma$ such that
$$(\Sym^2_0(T^*\Sigma) \otimes N\Sigma)^{\C} \cong \Spinor(T\Sigma) \otimes_{\C} \mathsf{E}.$$
Consequently, $\SFF_0^\C$ can be viewed as a twisted spinor.
\item If $M$ has constant curvature and $\varphi$ is torsion-free or nearly parallel (e.g., if $M = \R^7$, $T^7$, or $S^7$ with their standard $\mathrm{G}_2$-structures), then $\SFF_0^\C$ is a harmonic twisted spinor:
$$\Dirac \SFF_0^\C = 0.$$
Here, $\Dirac$ is the Dirac operator on $\Spinor(T\Sigma) \otimes \mathsf{E}$.
\end{enumerate}
\end{thm}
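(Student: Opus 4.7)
For part (a), I would proceed by representation theory. The associative condition reduces the structure group of $TM|_\Sigma$ to the stabilizer in $\G_2$ of an associative $3$-plane, which is $\SO(4)$; together with the chosen spin structure on $\Sigma$, this yields a principal $\Spin(4) = \SU(2) \times \SU(2)$ bundle over $\Sigma$. Each of $\Spinor(T\Sigma)$, $\Sym^2_0(T^*\Sigma)^\C$, and $(N\Sigma)^\C$ is associated to a specific irreducible representation of $\Spin(4)$, and a direct Clebsch-Gordan computation shows that $(\Sym^2_0(T^*\Sigma) \otimes N\Sigma)^\C$ and $\Spinor(T\Sigma) \otimes_\C \mathsf{E}$ are isomorphic as $\Spin(4)$-representations for a certain $\Spin(4)$-bundle $\mathsf{E}$ of complex rank $10$ (naturally identified with $\Sym^4(\Spinor(T\Sigma)) \otimes \Spinor^+(N\Sigma)$). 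Globalizing via associated bundles then produces the isomorphism, and $\SFF_0^\C$ is viewed as a twisted spinor.

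For part (b), the Dirac operator on $\Spinor(T\Sigma) \otimes_\C \mathsf{E}$ takes the form $\Dirac = \sum_i e_i \cdot \nabla_{e_i}$ in a local orthonormal frame $\{e_1, e_2, e_3\}$ of $T\Sigma$, with Clifford multiplication on the $\Spinor(T\Sigma)$ factor. Unraveling the isomorphism from part (a), I expect $\Dirac \SFF_0^\C$ to correspond, under projection to the appropriate summand, to a Clifford-contracted divergence built from $\sum_i \nabla^\perp_{e_i} \SFF_0$. The key input is the Codazzi equation
\[
(\nabla^\perp_X \SFF_0)(Y, Z) - (\nabla^\perp_Y \SFF_0)(X, Z) = (R^M(X, Y) Z)^\perp,
\]
together with minimality of $\Sigma$ (which follows from either hypothesis on $\varphi$, as noted in the introduction). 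The constant sectional curvature of $M$ forces $R^M(X, Y)Z$ to be tangential for tangential $X, Y, Z$, so the right-hand side vanishes and $\nabla^\perp \SFF_0$ becomes totally symmetric in its three tangential arguments. Combined with tracelessness, this yields $\sum_i (\nabla^\perp_{e_i} \SFF_0)(e_i, \cdot) = 0$, which translates to $\Dirac \SFF_0^\C = 0$.

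The two cases for $\varphi$ require slightly different treatments. For torsion-free $\varphi$, the identity $\nabla^{LC} \varphi = 0$ makes the isomorphism in part (a) parallel, so the Dirac equation follows immediately from the above computation. For nearly parallel $\varphi$ with $d \varphi = \lambda \ast \varphi$, one instead has $\nabla^{LC}_X \varphi = \tfrac{\lambda}{4} \iota_X \ast \varphi$, which introduces correction terms involving $\lambda$ when the Dirac operator is transferred through the identification. These correction terms must be shown to cancel using algebraic identities on $\varphi, \ast\varphi$ specific to nearly parallel structures, along with the associative condition $\left.\varphi\right|_\Sigma = \vol_\Sigma$. This cancellation in the nearly parallel case will be the main obstacle: it requires carefully tracking how the torsion of the $\G_2$-structure enters Clifford multiplication on the twisted spinor bundle, and verifying the resulting tensor identity using the special algebra of nearly parallel structures.
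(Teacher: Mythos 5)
Your part (a) is essentially the paper's argument: reduce to a principal $\Spin(4)$-bundle, identify each bundle with an associated bundle of an irreducible representation, and apply Clebsch--Gordan to produce the rank-$10$ twisting bundle $\mathsf{E} \cong \Sym^4(\Spinor) \otimes \mathsf{E}_{1,0}$; this part is fine.

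Part (b) has a genuine gap, and it is located exactly at the sentence ``which translates to $\Dirac \SFF_0^\C = 0$.'' Under the identification of part (a), Clifford multiplication acts only on the $\Spinor$ factor, so its kernel inside $(T^*\Sigma)^\C \otimes \Spinor \otimes \mathsf{E}_{1,4}$ is $\mathsf{E}_{0,3} \otimes \mathsf{E}_{1,4} \cong \mathsf{E}_{1,7} \oplus \mathsf{E}_{1,5} \oplus \mathsf{E}_{1,3} \oplus \mathsf{E}_{1,1}$, i.e.\ one particular copy of $\mathsf{E}_{1,5}$ (and of $\mathsf{E}_{1,3}$) inside the multiplicity-two isotypic pieces of the domain. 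Your Codazzi/constant-curvature/minimality argument shows only that $\nabla \SFF_0^\C$ is a section of $(N\Sigma \otimes \Sym^3_0(T^*\Sigma))^\C \cong \mathsf{E}_{1,7} \oplus \mathsf{E}_{1,5}$: total symmetry plus vanishing divergence does not control the $\mathsf{E}_{1,5}$-component at all, since the divergence map takes values in $(N\Sigma \otimes T^*\Sigma)^\C \cong \mathsf{E}_{1,3} \oplus \mathsf{E}_{1,1}$ and is blind to it, while $\cl$ is surjective onto $\mathsf{E}_{1,5} \oplus \mathsf{E}_{1,3}$ and hence does not annihilate the whole $\mathsf{E}_{1,5}$-isotypic piece. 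So ``divergence-free $\Rightarrow$ harmonic'' is a non sequitur unless you prove that the specific copy of $\mathsf{E}_{1,5}$ inside $N\Sigma \otimes \Sym^3_0(T^*\Sigma)$ lies in $\ker(\cl)$, which you neither assert nor compute. The missing idea is the paper's Remark~\ref{rmk:secfundformspace}: when $\varphi$ is torsion-free or nearly parallel, $\SFF$ satisfies $\SFF(u \times v, w) = u \times \SFF(v,w) + \SFF(u,w) \times v$, which confines $\SFF_0$ to the rank-$12$ irreducible subbundle $\mathsf{V}_{1,5} \subset \Sym^2_0(T^*\Sigma) \otimes N\Sigma$. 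Then $\nabla \SFF_0$ lies in $(\mathsf{V}_{1,5} \otimes T^*\Sigma) \cap (N\Sigma \otimes \Sym^3_0(T^*\Sigma)) \cong \mathsf{V}_{1,7}$, and Schur's Lemma does the rest, because the target $\Spinor \otimes \mathsf{E}_{1,4} \cong \mathsf{E}_{1,5} \oplus \mathsf{E}_{1,3}$ contains no copy of $\mathsf{E}_{1,7}$.

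Relatedly, your use of the hypothesis on $\varphi$ is misplaced. In the paper, the torsion-free and nearly parallel cases are treated identically --- the hypothesis enters \emph{only} through the algebraic symmetry above (and through minimality); there are no $\lambda$-dependent correction terms to cancel when transferring the connection, since the twisted spinor bundle and its connection are built from the spin lift of the $\SO(4)$-frame bundle with its induced tangential and normal connections. Your proposed analysis of ``correction terms involving $\lambda$'' in the nearly parallel case is therefore both unnecessary and, by your own admission, not carried out --- so even setting aside the divergence issue, the nearly parallel case of part (b) remains unproven in your proposal.
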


\begin{cor} \label{cor:Main} Let $M = \R^7$, $T^7$ or $S^7$ with its standard $\G_2$-structure.  If $\Sigma \subset M$ is an associative submanifold that is not totally geodesic, then $\Sigma$ admits a non-vanishing $\mathsf{E}$-twisted harmonic spinor for every spin structure on $\Sigma$. 
\end{cor}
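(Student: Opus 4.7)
\bigskip

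\noindent\textbf{Proof plan for Corollary \ref{cor:Main}.}

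The plan is to reduce the corollary directly to Theorem \ref{thm:Main} by verifying its hypotheses for the three model spaces and then observing that the complexified traceless second fundamental form cannot identically vanish. The strategy has four short steps.

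First, I would check that each of $\R^7$, $T^7$, and $S^7$ satisfies the curvature-and-torsion hypothesis of Theorem \ref{thm:Main}(b). The standard $\G_2$-structures on $\R^7$ and $T^7$ are torsion-free, and the ambient metric is flat, so the constant-curvature condition is trivially met. For the standard round $S^7$, the induced $\G_2$-structure $\varphi$ is nearly parallel (with $d\varphi = 4\ast\!\varphi$ in the usual normalization), and $S^7$ has constant positive sectional curvature. Hence in all three cases, Theorem \ref{thm:Main}(b) applies, giving $\Dirac \SFF_0^\C = 0$ under the isomorphism of part (a).

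Second, I would invoke the already-recorded fact that in each of these three ambient geometries, every associative $3$-fold $\Sigma$ is minimal, so its mean curvature satisfies $H = 0$ and the decomposition \eqref{eq:SFFSplit} collapses to $\SFF = \SFF_0$. Thus the hypothesis that $\Sigma$ is not totally geodesic is equivalent to $\SFF_0 \not\equiv 0$ as a section of $\Sym^2_0(T^*\Sigma) \otimes N\Sigma$.

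Third, because complexification is fiberwise injective, $\SFF_0 \not\equiv 0$ implies $\SFF_0^\C \not\equiv 0$ as a section of $(\Sym^2_0(T^*\Sigma) \otimes N\Sigma)^\C$. Transporting along the bundle isomorphism of Theorem \ref{thm:Main}(a), which is a genuine isomorphism of complex vector bundles (and hence an isomorphism on the level of global sections), we obtain a nonzero section $\psi \in \Gamma(\Spinor(T\Sigma) \otimes_\C \mathsf{E})$ with $\Dirac \psi = 0$, that is, a nontrivial $\mathsf{E}$-twisted harmonic spinor.

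Finally, I would address the clause ``for every spin structure'': the section $\SFF_0^\C$ is defined intrinsically from the immersion and does not depend on a choice of spin structure on $\Sigma$, while the identification of Theorem \ref{thm:Main}(a) is a bundle isomorphism once a spin structure is fixed. Hence the image remains a nonzero harmonic twisted spinor for every such choice. The only mild subtlety in the argument — and thus the place where I would take the most care in writing — is confirming that the isomorphism of Theorem \ref{thm:Main}(a) is natural enough that the computation $\Dirac \SFF_0^\C = 0$ of part (b) does hold for each spin structure (and each $\mathsf{E}$ it produces), rather than for just one privileged choice; but this is built into the formulation of the theorem, so no separate work is needed.
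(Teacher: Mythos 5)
Your proposal is correct and matches the paper's (implicit) reasoning: the corollary is immediate from Theorem \ref{thm:Main}, since the flat and nearly-parallel model cases satisfy the hypotheses of part (b), associatives there are minimal so $\SFF = \SFF_0$, and non-totally-geodesic then forces $\SFF_0^\C$ to be a non-vanishing harmonic twisted spinor under the isomorphism of part (a), for any choice of spin structure. No gaps.
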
 

\indent Corollary \ref{cor:Main} is remarkable in view of the scarcity of harmonic twisted spinors on compact Riemannian $3$-manifolds $(\Sigma^3, g)$.  Indeed, since the elliptic operator $\Dirac$ has Fredholm index 0, the space $\mathrm{Ker}(\Dirac)$ is generically zero-dimensional.  Furthermore, note that a Dirac operator $\Dirac$ on a twisted spinor bundle $\Spinor \otimes \mathsf{E} \to \Sigma$ depends on the data $(g, \mathfrak{s}, h^{\mathsf{E}}, \nabla^{\mathsf{E}})$, where $(g, \mathfrak{s})$ are the metric and spin structure on $\Sigma$, and where $(h^{\mathsf{E}}, \nabla^{\mathsf{E}})$ are the Hermitian structure and connection on $\mathsf{E}$, respectively.  Angel \cite{AnghelGeneric} and Maier \cite{MaierGeneric} have shown that for generic choices of $\nabla^{\mathsf{E}}$, the Dirac operator $\Dirac$ admits no non-trivial harmonic twisted spinors at all. \\

\indent Our method also yields analogues of Theorem \ref{thm:Main} for other calibrated geometries, provided certain modifications are made.  For example, if $L^4 \subset Z^8$ is a Cayley $4$-fold in an $8$-manifold $(Z^8, \Phi)$ with a $\Spin(7)$-structure $\Phi \in \Omega^4(Z)$, then the bundle $(\Sym^2_0(T^*L) \otimes NL)^\C$ can be shown to be isomorphic to a twisted \emph{half}-spinor bundle $\Spinor^+ \otimes \mathsf{E}$.  Moreover, if $\Phi$ is flat, then $\SFF_0^\C$ is a harmonic twisted half-spinor. \\
\indent In general, further subtleties may arise.  For instance, suppose that $L^4$ is a coassociative $4$-fold in a $7$-manifold $(M^7, \varphi)$ with a $\G_2$-structure.  Then $(\Sym^2_0(T^*L) \otimes NL)^\C$ is not isomorphic to a twisted spinor or half-spinor bundle.  Nevertheless, there exists a \emph{subbundle} of $(\Sym^2_0(T^*L) \otimes NL)^\C$ isomorphic to a twisted half-spinor bundle, say $\Spinor^+ \otimes \mathsf{E}$.  If the $\G_2$-structure on $M$ is torsion-free, then $\SFF_0^\C$ is a section of that subbundle.  Moreover, if $\varphi$ is flat, then $\SFF_0^\C$ is a harmonic twisted spinor. \\

\noindent \textbf{Acknowledgements:} The second author thanks Chung-Jun Tsai for many conversations, and thanks Julius Baldauf for bringing helpful references on harmonic spinors to his attention.

\section{Proof of the Main Result}

\indent \indent Let $(M^7, \varphi)$ be a $7$-manifold with a $\G_2$-structure $\varphi \in \Omega^3(M)$, let $g_M$ denote the induced Riemannian metric on $M$, and let $\times \colon \Lambda^2(TM) \to TM$ denote the vector cross product, i.e.:
$$\varphi(u,v,w) = g_M(u \times v, w).$$
Let $u \colon \Sigma^3 \to M^7$ be an associative submanifold, and let $\SFF$ be its second fundamental form.  Decomposing $\SFF = \SFF_0 + H\,\text{Id}$ into traceless and trace parts as in (\ref{eq:SFFSplit}), the tensor $\SFF_0$ is a trace-free $N\Sigma$-valued quadratic form on $\Sigma$: 
$$\SFF_0 \in \Gamma(\Sym^2_0(T^*\Sigma) \otimes N\Sigma).$$
Here, we are using $N\Sigma$ to denote the normal bundle of $\Sigma$, and $\Sym^2_0(T^*\Sigma)$ to denote the vector bundle of trace-free quadratic forms on $\Sigma$. \\
\indent A oriented orthonormal frame $(e_1, \ldots, e_7)$ of $u^*(TM) \to \Sigma$ will be called an \emph{$\SO(4)$-frame} if it satisfies
\begin{align*}
\varphi(e_i, e_j, e_k) & = \epsilon_{ijk} & \text{span}(e_1, e_2, e_3) & = T\Sigma & \text{span}(e_4, e_5, e_5, e_7) & = N\Sigma,
\end{align*}
where here $\epsilon_{ijk} \in \{-1,0,1\}$ is the unique symbol that is antisymmetric in all three indices and satisfies 
\begin{align*}
& \epsilon_{123} = \epsilon_{145} = \epsilon_{167} = \epsilon_{246} = \epsilon_{275} = \epsilon_{374} = \epsilon_{365} = 1 \\
& \epsilon_{ijk} = 0 \ \text{ if }(i,j,k) \text{ is not a permutation of }(1,2,3),\, (1,4,5), \, \ldots,\, (3,6,5).
\end{align*}
We let $F \to \Sigma$ denote the principal $\SO(4)$-bundle whose fiber over $x \in \Sigma$ consists of the $\SO(4)$-frames at $x$. \\
\indent Since $\Sigma$ is an orientable $3$-manifold, it admits spin structures.  Once and for all, we choose a spin structure $\mathfrak{s}$ on $\Sigma$.  This choice determines a principal $\Spin(4)$-bundle $P_{\mathfrak{s}} \to \Sigma$ whose total space double covers $F$:
$$\begin{tikzcd}
\mathbb{Z}_2 \arrow[r]   & P_{\mathfrak{s}} \arrow[d] \\
\mathrm{SO}(4) \arrow[r] & F \arrow[d]                \\
                         & \Sigma                    
\end{tikzcd}$$
\indent In $\S$\ref{sec:SFFTwistedSpinor}, we will show that the complexification of $\Sym^2_0(T^*\Sigma) \otimes N\Sigma$ can be identified with a twisted spinor bundle $\Spinor(T\Sigma) \otimes \mathsf{E}$, thereby proving Theorem \ref{thm:Main}(a).  Geometrically, $\mathsf{E}$ is a vector bundle associated to the principal bundle $P_{\mathfrak{s}} \to \Sigma$ via a particular $\Spin(4)$-representation.  As such, we need some facts about the representation theories of $\Spin(4)$ and $\SO(4)$, to which we now turn.

\subsection{Representation Theory of $\Spin(4)$ and $\SO(4)$} \label{sec:RepTheory}

\indent \indent We now recall some basic facts about the representation theory of $\Spin(4)$ and $\SO(4)$.  First, recall that for each integer $p \geq 0$, the complex vector space $\Sym^p(\C^2) \simeq \C^{p+1}$ is an $\SU(2)$-representation in the following way: for $A \in \SU(2)$ and $f \in \Sym^p(\C^2)$, we define
$$(A \cdot f)(\mathbf{z}) := f(A^T\mathbf{z}).$$
Therefore, since $\Spin(4) \cong \SU(2) \times \SU(2)$, we have complex $\Spin(4)$-representations
$$U_{p,q} := \Sym^p(\C^2) \otimes_{\C} \Sym^q(\C^2) \simeq \C^{(p+1)(q+1)}$$
for all $p, q \in \Z$ with $p,q \geq 0$.  It is well-known that each $U_{p,q}$ is irreducible, and conversely that every complex irreducible $\Spin(4)$-representation is of this form.  For future use, we note that the Clebsch-Gordan formula yields the irreducible decompositions
\begin{align} \label{eq:ClebschGordan}
    U_{0,1} \otimes U_{p,q+1} & \cong U_{p,q+2} \oplus U_{p,q} \\
    U_{0,1} \otimes U_{p,0} & \cong U_{p,1}
\end{align}
\indent Next, recall that $\SO(4) \cong \Spin(4)/\Z_2$.  One can check that the $\Spin(4)$-action on $U_{p,q}$ descends to an $\SO(4)$-action if and only if $p \equiv q\, (\text{mod }2)$.  Moreover, in this case, the complex representation $U_{p,q}$ has a natural real structure $c \colon U_{p,q} \to U_{p,q}$, and we may consider its $+1$-eigenspace:
$$W_{p,q} = \left\{ f \in U_{p,q} \colon c(f) = f \right\} \simeq \R^{(p+1)(q+1)}$$
so that $W_{p,q}^\C \cong U_{p,q}$.  It is well-known that each $W_{p,q}$ is a real irreducible $\SO(4)$-module.  Conversely, every real irreducible $\SO(4)$-module is of the form $W_{p,q}$ for some non-negative $p,q \in \Z$ with $p \equiv q\,(\text{mod }2)$.

\subsection{The Second Fundamental Form as a Twisted Spinor} \label{sec:SFFTwistedSpinor}

\indent \indent We now use the above $\Spin(4)$- and $\SO(4)$-representations to construct two families of vector bundles over $\Sigma$.  First, for any $p,q \geq 0$, we consider the associated complex vector bundle
$$\mathsf{E}_{p,q} := P_{\mathfrak{s}} \times_{\mu} U_{p,q}.$$
Second, if $p \equiv q \ (\text{mod }2)$, we may also consider the associated \emph{real} vector bundle
$$\mathsf{V}_{p,q} := F \times_{\mu} W_{p,q}.$$
We reiterate that $\mathsf{V}_{p,q}$ is only defined when $p \equiv q \ (\text{mod }2)$.  In this case, we have
$$\mathsf{E}_{p,q} \cong \mathsf{V}_{p,q}^\C$$

\indent  Many vector bundles fundamental to the geometry of associatives $\Sigma \subset M$ are, in fact, of the form $\mathsf{E}_{p,q}$ or $\mathsf{V}_{p,q}$.  For example, the complex spinor bundle, the tangent bundle, and the normal bundle of $\Sigma$ are, respectively:
\begin{align*}
\Spinor := \Spinor(T\Sigma) & \cong \mathsf{E}_{0,1} & T\Sigma & \cong \mathsf{V}_{0,2} & N \Sigma & \cong \mathsf{V}_{1,1}.
\end{align*} 
Moreover, the bundle of trace-free quadratic forms with values in the normal bundle is:
$$\Sym^2_0(T^*\Sigma) \otimes N\Sigma \cong \mathsf{V}_{1,5} \oplus \mathsf{V}_{1,3}.$$

\begin{rmk}\label{rmk:secfundformspace} If $\varphi$ is torsion-free or nearly-parallel, then the second fundamental form of an associative $\Sigma \subset M$ satisfies the symmetry
$$\SFF(u \times v, w) = u \times \SFF(v,w) + \SFF(u,w) \times v.$$
Consequently, the traceless second fundamental form $\SFF_0$ is a section of the rank $12$ subbundle $\mathsf{V}_{1,5} \subset \Sym^2_0(T^*\Sigma) \otimes N\Sigma$.  For the time being, we do not assume that $\varphi$ is torsion-free nor nearly-parallel.
\end{rmk}

\indent In \cite{McLeanDeformations}, McLean observed that the complexified normal bundle of $\Sigma$ is isomorphic to a twisted spinor bundle.  To see this, we simply notice that
$$N\Sigma^\C \cong \mathsf{V}_{1,1}^\C \cong \mathsf{E}_{1,1} \cong \mathsf{E}_{0,1} \otimes \mathsf{E}_{1,0} \cong \Spinor \otimes \mathsf{E}_{1,0}.$$
In general, given a complex vector bundle over $\Sigma$, we wish to know whether it is isomorphic to a twisted spinor bundle.  The following lemma addresses this:

\begin{lem} \label{lem:TwistedSpinorCriterion} For any $p,q \geq 0$, we have
\begin{align*}
     \Spinor \otimes \mathsf{E}_{p,q+1}  & \cong \mathsf{E}_{p,q+2} \oplus \mathsf{E}_{p,q} \\
    \Spinor \otimes \mathsf{E}_{p,0} & \cong \mathsf{E}_{p,1}.
\end{align*}
\end{lem}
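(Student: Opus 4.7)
The plan is to reduce the bundle isomorphisms to the Clebsch–Gordan decompositions of $\Spin(4)$-representations already recorded in equation (2.1) of the excerpt, by invoking the functoriality of the associated-bundle construction $P_{\mathfrak{s}} \times_\mu (-)$.

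First I would recall that $\Spinor \cong \mathsf{E}_{0,1}$, which was stated in $\S$\ref{sec:SFFTwistedSpinor}, so that
$$\Spinor \otimes \mathsf{E}_{p,q+1} \cong \mathsf{E}_{0,1} \otimes \mathsf{E}_{p,q+1}
\qquad \text{and} \qquad \Spinor \otimes \mathsf{E}_{p,0} \cong \mathsf{E}_{0,1} \otimes \mathsf{E}_{p,0}.$$
Next, I would use the fact that associated bundles behave functorially with respect to direct sums and tensor products: for any two complex $\Spin(4)$-representations $V, W$, there are canonical isomorphisms of complex vector bundles
$$(P_{\mathfrak{s}} \times_\mu V) \otimes_{\C} (P_{\mathfrak{s}} \times_\mu W) \;\cong\; P_{\mathfrak{s}} \times_\mu (V \otimes_{\C} W),$$
$$P_{\mathfrak{s}} \times_\mu (V \oplus W) \;\cong\; (P_{\mathfrak{s}} \times_\mu V) \oplus (P_{\mathfrak{s}} \times_\mu W),$$
obtained by sending $[p, v \otimes w] \mapsto [p, v] \otimes [p, w]$ and $[p, v + w] \mapsto [p,v] + [p,w]$ respectively, and checking that these are well-defined on the equivalence classes.

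Applying the tensor-product isomorphism above yields
$$\mathsf{E}_{0,1} \otimes \mathsf{E}_{p,q+1} \;\cong\; P_{\mathfrak{s}} \times_\mu \bigl( U_{0,1} \otimes U_{p,q+1} \bigr),
\qquad
\mathsf{E}_{0,1} \otimes \mathsf{E}_{p,0} \;\cong\; P_{\mathfrak{s}} \times_\mu \bigl( U_{0,1} \otimes U_{p,0} \bigr).$$
I would then substitute the Clebsch–Gordan decompositions (2.1), namely $U_{0,1} \otimes U_{p,q+1} \cong U_{p,q+2} \oplus U_{p,q}$ and $U_{0,1} \otimes U_{p,0} \cong U_{p,1}$, and apply the direct-sum functoriality isomorphism to conclude
$$\Spinor \otimes \mathsf{E}_{p,q+1} \cong \mathsf{E}_{p,q+2} \oplus \mathsf{E}_{p,q}, \qquad
\Spinor \otimes \mathsf{E}_{p,0} \cong \mathsf{E}_{p,1}.$$

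There is no real obstacle here: the entire content is that the representation-theoretic identity transfers to an identity of associated bundles, so the only thing to verify is the naturality of the associated-bundle construction, which is standard. The Clebsch–Gordan identities themselves have already been cited in the text, so no new representation theory is required.
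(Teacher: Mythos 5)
Your proposal is correct and follows essentially the same route as the paper: identify $\Spinor \cong \mathsf{E}_{0,1}$, use the compatibility of the associated-bundle construction $P_{\mathfrak{s}} \times_\mu (-)$ with tensor products and direct sums, and then apply the Clebsch--Gordan decompositions (\ref{eq:ClebschGordan}). The only difference is that you spell out the naturality maps explicitly, which the paper treats as standard.
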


\begin{proof} We calculate, using (\ref{eq:ClebschGordan}), that
\begin{align*}
\Spinor \otimes \mathsf{E}_{p,q+1} \cong \mathsf{E}_{0,1} \otimes \mathsf{E}_{p,q+1} & = (P_{\mathfrak{s}} \times_{\mu} U_{0,1}) \otimes (P_{\mathfrak{s}} \times_{\mu} U_{p,q+1}) \\
& \cong P_{\mathfrak{s}} \times_{\mu} (U_{0,1} \otimes U_{p,q+1}) \\
& \cong P_{\mathfrak{s}} \times_{\mu} (U_{p,q+2} \oplus U_{p,q}) \\
& \cong (P_{\mathfrak{s}} \times_{\mu} U_{p,q+2}) \oplus (P_{\mathfrak{s}} \times_{\mu} U_{p,q}) \\
& = \mathsf{E}_{p,q+2} \oplus \mathsf{E}_{p,q}.
\end{align*}
The other isomorphism is proved similarly.
\end{proof}

\begin{cor} \label{cor:TwistedSpinorID} As vector bundles with structure group $\Spin(4)$, we have an isomorphism
$$(\Sym^2_0(T^*\Sigma) \otimes N\Sigma)^{\C} \cong \Spinor \otimes \mathsf{E}_{1,4}.$$
Thus, $\SFF^{\C}$ can be viewed as a twisted spinor.
\end{cor}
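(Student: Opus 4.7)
The proof is essentially bookkeeping: the plan is to recognize that the two bundles in the statement decompose into the \emph{same} pair of irreducible $\Spin(4)$-associated summands, and then chain together already-established identifications. Concretely, I would execute three short steps. First, invoke the $\SO(4)$-decomposition stated just before the corollary,
$$\Sym^2_0(T^*\Sigma) \otimes N\Sigma \cong \mathsf{V}_{1,5} \oplus \mathsf{V}_{1,3},$$
which is available because $1 \equiv 5 \equiv 3 \pmod{2}$, so both real summands are defined. Second, complexify and apply $\mathsf{V}_{p,q}^{\C} \cong \mathsf{E}_{p,q}$ termwise to obtain
$$(\Sym^2_0(T^*\Sigma) \otimes N\Sigma)^{\C} \cong \mathsf{E}_{1,5} \oplus \mathsf{E}_{1,3}.$$
Third, apply Lemma \ref{lem:TwistedSpinorCriterion} with $p=1$ and $q=3$, giving
$$\Spinor \otimes \mathsf{E}_{1,4} \cong \mathsf{E}_{1,5} \oplus \mathsf{E}_{1,3}.$$
Composing these identifications yields the claimed $\Spin(4)$-bundle isomorphism. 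The last clause of the statement, that $\SFF_0^{\C}$ can be viewed as a twisted spinor, then follows because $\SFF_0^{\C}$ is by definition a section of the left-hand bundle.

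The only genuine content underlying the argument is the stated splitting $\Sym^2_0(T^*\Sigma) \otimes N\Sigma \cong \mathsf{V}_{1,5} \oplus \mathsf{V}_{1,3}$, and this is where I expect the work to lie if one wants a self-contained proof. Verifying it amounts to a two-stage Clebsch-Gordan computation starting from $T\Sigma \cong \mathsf{V}_{0,2}$ and $N\Sigma \cong \mathsf{V}_{1,1}$. First, from $\Sym^2(\Sym^2\C^2) \cong \Sym^4\C^2 \oplus \C$ one gets $\Sym^2(T^*\Sigma) \cong \mathsf{V}_{0,4} \oplus \mathsf{V}_{0,0}$, so the trace-free part is $\Sym^2_0(T^*\Sigma) \cong \mathsf{V}_{0,4}$. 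Second, tensoring with $\mathsf{V}_{1,1}$ and applying Clebsch-Gordan on the second $\SU(2)$-factor of $\Spin(4)$ via $\Sym^4\C^2 \otimes \Sym^1\C^2 \cong \Sym^5\C^2 \oplus \Sym^3\C^2$ gives the asserted decomposition. Dimension check: $\dim \mathsf{V}_{1,5} + \dim \mathsf{V}_{1,3} = 12 + 8 = 20 = 5 \cdot 4$, matching $\dim (\Sym^2_0(\R^3) \otimes \R^4)$, so no summand has been lost. With this decomposition in hand, the corollary reduces to the three-step chain above.
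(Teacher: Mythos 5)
Your proposal is correct and follows essentially the same route as the paper: the paper's proof is exactly the chain obtained by applying Lemma \ref{lem:TwistedSpinorCriterion} with $(p,q)=(1,3)$ to get $\Spinor \otimes \mathsf{E}_{1,4} \cong \mathsf{E}_{1,5} \oplus \mathsf{E}_{1,3} \cong (\mathsf{V}_{1,5} \oplus \mathsf{V}_{1,3})^{\C} \cong (\Sym^2_0(T^*\Sigma) \otimes N\Sigma)^{\C}$. Your additional Clebsch--Gordan verification of the splitting $\Sym^2_0(T^*\Sigma) \otimes N\Sigma \cong \mathsf{V}_{1,5} \oplus \mathsf{V}_{1,3}$ (which the paper asserts without proof just before Remark \ref{rmk:secfundformspace}) is accurate and a welcome supplement, but not a different method.
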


\begin{proof} By Lemma \ref{lem:TwistedSpinorCriterion} with $(p,q) = (1,3)$, we see that
$$\Spinor \otimes \mathsf{E}_{1,4} \cong \mathsf{E}_{1,5} \oplus \mathsf{E}_{1,3} \cong (\mathsf{V}_{1,5} \oplus \mathsf{V}_{1,3})^\C \cong \left( \Sym^2_0(T^*\Sigma) \otimes N\Sigma \right)^\C.$$
\end{proof}
\indent The isomorphism in the preceding corollary can be made explicit by tracing through the following alternative proof.  We observe that
\begin{align*}
\Spinor \otimes \mathsf{E}_{1,4} & \cong \mathsf{E}_{0,1} \otimes \mathsf{E}_{1,0} \otimes \Sym^4(\Spinor) \\
& \cong \mathsf{E}_{0,1} \otimes \mathsf{E}_{1,0} \otimes \Sym^2_0(\mathsf{E}_{0,2}) \\
& \cong (N\Sigma \otimes \Sym^2_0(T^*\Sigma))^\C.
\end{align*}
Concretely, for $\sigma \in \Spinor$ and $\xi \in \mathsf{E}_{1,0}$, we can view $\sigma \otimes \xi \in \mathsf{E}_{1,1} \cong N\Sigma^{\C}$ as a complex normal vector.  Moreover, for spinors $\rho_1, \rho_2 \in \Spinor$, we can view $\rho_1 \circ \rho_2 \in \Sym^2(\Spinor) \cong T^*\Sigma^\C$ as a complex cotangent vector.  Altogether, our isomorphism is described on decomposable elements as follows:
\begin{align*}
N\Sigma^\C \otimes \Sym^2_0(T^*\Sigma)^\C & \to \Spinor \otimes \mathsf{E}_{1,4} \\
(\sigma \otimes \xi) \otimes ( (\rho_1 \circ \rho_2) \circ (\rho_3 \circ \rho_4) ) & \mapsto \sigma \otimes (\xi \otimes \rho_1 \rho_2 \rho_3 \rho_4).
\end{align*}
Here, in writing $\rho_1\rho_2\rho_3\rho_4$, we mean the result of applying the natural map
\begin{align*}
\Sym^2_0(T^*\Sigma) \cong \Sym^2_0(\Sym^2(\Spinor)) & \to \Sym^4(\Spinor) \cong \mathsf{E}_{0,4} \\
q_1 \circ q_2 & \mapsto q_1 q_2
\end{align*}
to the trace-free quadratic form $(\rho_1 \circ \rho_2) \circ (\rho_3 \circ \rho_4)$.  

\subsection{The Twisted Spinor Bundle}

\indent \indent Having proved the first part of Theorem \ref{thm:Main} (as Corollary \ref{cor:TwistedSpinorID}), we now turn to the second part.  For this, we need to equip both $\Spinor \otimes \mathsf{E}_{1,4}$ and $(\Sym^2_0(T^*\Sigma) \otimes N\Sigma)^{\C}$ with suitable bundle metrics and connections.  In this section, we consider $\Spinor \otimes \mathsf{E}_{1,4}$.  In the following section, we examine  $(\Sym^2_0(T^*\Sigma) \otimes N\Sigma)^{\C}$ and draw conclusions.

\subsubsection{Structure on $\Spinor \otimes \mathsf{E}_{1,4}$}

\indent \indent The vector bundle $\mathsf{E}_{1,4} \to \Sigma$ is associated to the principal $\Spin(4)$-bundle $P_{\mathfrak{s}} \to \Sigma$ via the irreducible complex representation $\mu \colon \Spin(4) \to \GL(U_{1,4})$.  On $U_{1,4} = \C^2 \otimes \Sym^2(\C^4)$, we can exhibit a $\Spin(4)$-invariant Hermitian inner product via the tensor product of those on $\C^2$ and $\Sym^2(\C^4)$.  In fact, since $U_{1,4}$ is an irreducible representation of the compact Lie group $\Spin(4)$, this Hermitian inner product is the unique $\Spin(4)$-invariant one up to scale. \\
\indent Thus, we obtain an induced Hermitian bundle metric $\langle \cdot, \cdot \rangle$ on $\mathsf{E}_{1,4}$ as follows.  If we recast sections $\tau \in \Gamma(\mathsf{E}_{1,4})$ as equivariant functions $q \colon P_{\mathfrak{s}} \to U_{1,4}$, say $q = q_{\bar{i}} e_i$ for some unitary basis $(e_1, \ldots, e_{10})$ of $U_{1,4}$, then 
$$\langle \tau, \tau \rangle = q_{\bar{i}} \overline{q_{\bar{i}}}.$$
\indent Next, recall that the principal bundle $P_{\mathfrak{s}} \to \Sigma$ has a natural connection, namely the spin connection $\psi \in \Omega^1(P_{\mathfrak{s}}; \mathfrak{spin}(4))$.  Therefore, if $\tau \in \Gamma(\mathsf{E}_{1,4})$ is a local section, say $\tau = \tau_i \epsilon_i$ for some (local) functions $\tau_i \in C^\infty(\Sigma)$ and local frame $(\epsilon_1, \ldots, \epsilon_{10})$ of $\mathsf{E}_{1,4}$, then we may define the covariant derivative
$$\nabla^{\mathsf{E}} \colon \Gamma(\mathsf{E}_{1,4}) \to \Gamma( T^*\Sigma \otimes \mathsf{E}_{1,4} )$$
via
$$\nabla^{\mathsf{E}}\tau = d\tau_i \otimes \epsilon_i + \tau_i\, \mu_*(\psi)_{ij} \otimes \epsilon_j,$$
where $\mu_* \colon \mathfrak{spin}(4) \to \mathfrak{u}(U_{1,4})$ is the $\mathfrak{spin}(4)$-representation on $U_{1,4}$. \\

\indent Now, having equipped $\mathsf{E}_{1,4}$ with a Hermitian bundle metric and connection $\nabla^{\mathsf{E}}$ as above, a completely analogous procedure endows the spinor bundle $\Spinor \cong \mathsf{E}_{0,1}$ with a Hermitian bundle metric and connection $\nabla^{\Spinor}$.  Finally, we may equip $\Spinor \otimes \mathsf{E}_{1,4}$ with its tensor product metric
$$\langle \sigma_1 \otimes \tau_1, \sigma_2 \otimes \tau_2 \rangle := \langle \sigma_1, \sigma_2 \rangle \langle \tau_1, \tau_2 \rangle,$$
and tensor product connection
\begin{align*}
\nabla \colon \Gamma(\Spinor \otimes \mathsf{E}_{1,4}) & \to \Gamma( (T^*\Sigma)^\C \otimes \Spinor \otimes \mathsf{E}_{1,4}) \\
\nabla(\sigma \otimes \tau) & = (\nabla^{\Spinor} \sigma) \otimes \tau + \sigma \otimes \nabla^{\mathsf{E}}\tau.
\end{align*}

\subsubsection{The Dirac Operator on $\Spinor \otimes \mathsf{E}_{1,4}$}

\indent \indent Let us now recall some relevant definitions, following \cite[$\S$II.5]{LawsonMichelsonSpin} and \cite[$\S$12.1]{NicolaescuLectures}.

\begin{defn} Let $E \to (\Sigma,g)$ be a complex vector bundle over a Riemannian manifold $(\Sigma, g)$ with Levi-Civita connection $\nabla^{\mathrm{LC}}$.  A \emph{Dirac structure on $E$} is a triple $(\langle \cdot, \cdot \rangle, \nabla, \cl)$ consisting of a Hermitian bundle metric $\langle \cdot, \cdot \rangle$, a connection $\nabla$, and a $\Cl(T^*\Sigma)^\C$-module structure on $E$, denoted
\begin{align*}
\cl \colon T^*\Sigma^\C \otimes E & \to E \\
\cl(\alpha \otimes \sigma) & = \alpha \cdot \sigma,
\end{align*}
that are pairwise compatible in the following sense:
\begin{enumerate}[(1)]
\item $\nabla$ is $\langle \cdot, \cdot \rangle$-compatible.
\item $\langle \alpha \cdot \sigma_1, \alpha \cdot \sigma_2 \rangle = |\alpha|^2 \langle \sigma_1, \sigma_2 \rangle$ for all $\alpha \in T^*\Sigma^\C$ and $\sigma \in E$.
\item $\nabla(\alpha \cdot \sigma) = (\nabla^{\mathrm{LC}} \alpha) \cdot \sigma + \alpha \cdot \nabla \sigma$ for all $\alpha \in \Gamma(T^*\Sigma^\C)$ and $\sigma \in \Gamma(E)$.
\end{enumerate}
\indent A \emph{Dirac bundle} is a vector bundle $E \to \Sigma$ equipped with a Dirac structure.  In this case, its \emph{Dirac operator} is the composition
$$\Dirac = \mathrm{cl} \circ \nabla \colon \Gamma(E) \to \Gamma(E).$$
Diagrammatically,
$$\Dirac \colon \Gamma(E) \xrightarrow{\nabla} \Gamma(T^*\Sigma^\C \otimes E) \xrightarrow{\cl} \Gamma(E).$$
\end{defn}

\indent We now equip $\Spinor \otimes \mathsf{E}_{1,4}$ with its standard Dirac structure.  For this, recall that $\Spinor \otimes \mathsf{E}_{1,4}$ already carries the tensor product metric $\langle \cdot, \cdot \rangle$ and tensor product connection $\nabla$.  Now, for $\alpha \in T^*\Sigma^\C$, $\sigma \in \Spinor$, and $\tau \in \mathsf{E}_{1,4}$, we define
\begin{align*}
    \text{cl} \colon T^*\Sigma^\C \otimes ( \Spinor \otimes \mathsf{E}_{1,4}) & \to  \Spinor \otimes \mathsf{E}_{1,4} \\
\alpha \cdot (\sigma \otimes \tau) & := (\alpha \cdot \sigma) \otimes \tau.
\end{align*}
It is then a standard fact \cite[$\S$II.5]{LawsonMichelsonSpin}, \cite[p. 643]{NicolaescuLectures} that the triple $(\langle \cdot, \cdot  \rangle, \nabla, \mathrm{cl})$ is a Dirac structure on $\Spinor \otimes \mathsf{E}_{1,4}$.  In particular, $\Spinor \otimes \mathsf{E}_{1,4}$ admits a Dirac operator 
$$\Dirac \colon \Gamma( \Spinor \otimes \mathsf{E}_{1,4} ) \to \Gamma( \Spinor \otimes \mathsf{E}_{1,4} ).$$

\subsubsection{A Condition for Harmonicity}

\indent \indent We now give a sufficient condition for a twisted spinor $\gamma \in \Gamma(\Spinor \otimes \mathsf{E}_{1,4})$ to satisfy $\Dirac \gamma = 0$.  For this, we observe that the isomorphism
\begin{align*}
(T^*\Sigma)^\C \otimes \Spinor \otimes \mathsf{E}_{1,4} & \cong \mathsf{E}_{0,2} \otimes (\mathsf{E}_{1,5} \oplus \mathsf{E}_{1,3}) \\
& \cong \mathsf{E}_{1,7} \oplus 2\mathsf{E}_{1,5} \oplus 2\mathsf{E}_{1,3} \oplus \mathsf{E}_{1,1}
\end{align*}
allows us to regard the Dirac operator as a composition
$$\Dirac \colon \Gamma(\Spinor \otimes \mathsf{E}_{1,4}) \xrightarrow{\nabla} \Gamma(\mathsf{E}_{1,7} \oplus 2\mathsf{E}_{1,5} \oplus 2\mathsf{E}_{1,3} \oplus \mathsf{E}_{1,1}) \xrightarrow{\cl} \Gamma(\Spinor \otimes \mathsf{E}_{1,4}).$$
The equivariance of $\mathrm{cl}$, together with Schur's Lemma, now yields the following: \\

\begin{prop} \label{prop:SufficientHarmonic} Let $\gamma \in \Gamma(\Spinor \otimes \mathsf{E}_{1,4})$ be a twisted spinor.  If $\nabla \gamma \in \Gamma(\mathsf{E}_{1,7} \oplus \mathsf{E}_{1,1})$, then $\Dirac \gamma = 0$.
\end{prop}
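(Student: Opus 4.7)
The plan is to observe that Proposition \ref{prop:SufficientHarmonic} is essentially a direct consequence of Schur's Lemma applied to the $\Spin(4)$-equivariant bundle map $\cl$. Since the Dirac operator factors as $\Dirac = \cl \circ \nabla$, the vanishing of $\Dirac \gamma$ will follow once one shows that $\cl$ vanishes identically on the subbundles $\mathsf{E}_{1,7}$ and $\mathsf{E}_{1,1}$ inside $(T^*\Sigma)^\C \otimes \Spinor \otimes \mathsf{E}_{1,4}$.

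First, I would recall that $\cl \colon (T^*\Sigma)^\C \otimes \Spinor \otimes \mathsf{E}_{1,4} \to \Spinor \otimes \mathsf{E}_{1,4}$ is induced by a $\Spin(4)$-equivariant linear map of representations, namely the Clifford multiplication $U_{0,2} \otimes U_{0,1} \to U_{0,1}$ tensored with the identity on $U_{1,4}$; equivariance is immediate from the construction of $\cl$ at the representation level. Next, I would use Corollary \ref{cor:TwistedSpinorID} to identify the codomain $\Spinor \otimes \mathsf{E}_{1,4} \cong \mathsf{E}_{1,5} \oplus \mathsf{E}_{1,3}$, and recall from the displayed decomposition preceding the proposition that the domain decomposes as $\mathsf{E}_{1,7} \oplus 2\mathsf{E}_{1,5} \oplus 2\mathsf{E}_{1,3} \oplus \mathsf{E}_{1,1}$.

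Now I would invoke Schur's Lemma: since the irreducible $\Spin(4)$-representations $U_{p,q}$ are pairwise non-isomorphic for distinct $(p,q)$, any $\Spin(4)$-equivariant map from $U_{1,7}$ or $U_{1,1}$ into $U_{1,5} \oplus U_{1,3}$ must be zero. Equivalently, at the bundle level, $\cl|_{\mathsf{E}_{1,7}} = 0$ and $\cl|_{\mathsf{E}_{1,1}} = 0$. Therefore, if $\gamma \in \Gamma(\Spinor \otimes \mathsf{E}_{1,4})$ satisfies $\nabla \gamma \in \Gamma(\mathsf{E}_{1,7} \oplus \mathsf{E}_{1,1})$, then $\Dirac \gamma = \cl(\nabla \gamma) = 0$, as required.

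There is really no substantive obstacle here; the argument is essentially a bookkeeping application of Schur's Lemma, and the only point that merits a line of explanation is that $\cl$ is genuinely $\Spin(4)$-equivariant as a bundle map (since it arises from an equivariant map of the fiber representations). If any difficulty arises, it is only in being careful that the two copies of $\mathsf{E}_{1,5}$ and of $\mathsf{E}_{1,3}$ in the domain are not conflated, but this is irrelevant to the conclusion since $\gamma$ is assumed to have no component in those summands to begin with.
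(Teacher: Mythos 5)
Your proposal is correct and follows essentially the same argument as the paper: decompose the domain and codomain of $\cl$ into the irreducible summands $\mathsf{E}_{1,7} \oplus 2\mathsf{E}_{1,5} \oplus 2\mathsf{E}_{1,3} \oplus \mathsf{E}_{1,1}$ and $\mathsf{E}_{1,5} \oplus \mathsf{E}_{1,3}$, invoke $\Spin(4)$-equivariance of the Clifford action and Schur's Lemma to kill the $\mathsf{E}_{1,7}$ and $\mathsf{E}_{1,1}$ components, and conclude $\Dirac\gamma = \cl(\nabla\gamma) = 0$. The extra remark on why $\cl$ is equivariant at the level of fiber representations is a fine (if brief) addition, but the substance matches the paper's proof.
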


\begin{proof} Consider the bundle map given by the Clifford action
$$\cl \colon (T^*\Sigma)^\C \otimes \Spinor \otimes \mathsf{E}_{1,4} \to \Spinor \otimes \mathsf{E}_{1,4},$$
and recall the isomorphisms
\begin{align*}
(T^*\Sigma)^\C \otimes \Spinor \otimes \mathsf{E}_{1,4} & \cong \mathsf{E}_{1,7} \oplus 2\mathsf{E}_{1,5} \oplus 2\mathsf{E}_{1,3} \oplus \mathsf{E}_{1,1} & \Spinor \otimes \mathsf{E}_{1,4} & \cong \mathsf{E}_{1,5} \oplus \mathsf{E}_{1,3}.
\end{align*}
The map $\mathrm{cl}$ is $\Spin(4)$-equivariant.  Therefore, by Schur's Lemma, if $\beta \in \mathsf{E}_{1,7} \oplus \mathsf{E}_{1,1}$, then $\cl(\beta) = 0$.  Taking $\beta = \nabla \gamma$ proves the result.
\end{proof}

\subsection{Proof of Theorem \ref{thm:Main}(b)}

\indent \indent We now return to the bundle $(\Sym^2_0(T^*\Sigma) \otimes N\Sigma)^{\C}$.  As both $T\Sigma$ and $N\Sigma$ carry natural bundle metrics, the real vector bundle $\Sym^2_0(T^*\Sigma) \otimes N\Sigma$ inherits a natural bundle metric.  Extending by sesquilinearity yields a Hermitian bundle metric on $(\Sym^2_0(T^*\Sigma) \otimes N\Sigma)^{\C}$.  The isomorphism
$$F \colon  (\Sym^2_0(T^*\Sigma) \otimes N\Sigma)^{\C}  \xrightarrow{\cong} \Spinor \otimes \mathsf{E}_{1,4}$$
of Corollary \ref{cor:TwistedSpinorID} is then a Hermitian bundle isometry. \\
\indent Now, the Levi-Civita connection on $M$ induces tangential and normal connections on $T\Sigma$ and $N\Sigma$, which in turn induce a connection on $\Sym^2_0(T^*\Sigma) \otimes N\Sigma$.  After extending by $\C$-linearity, we let $\nabla^{\mathrm{LC}}$ denote the resulting connection on $(\Sym^2_0(T^*\Sigma) \otimes N\Sigma)^{\C}$.  One can check that the isomorphism $F$ also intertwines the connections, in the sense that for any $\sigma \in \Gamma((\Sym^2_0(T^*\Sigma) \otimes N\Sigma)^{\C})$,
$$(\Id \otimes F)(\nabla^{\mathrm{LC}}\sigma) = \nabla(F(\sigma)).$$
As such, we will suppress $F$ from the notation.  We are now in a position to prove:

\begin{prop} \label{prop:IntertwineConnections} Suppose that $M$ has constant sectional curvature and $\varphi$ is torsion-free or nearly parallel.  Then $\Dirac(\SFF^{\C}) = 0$.
\end{prop}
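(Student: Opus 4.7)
My plan is to apply Proposition~\ref{prop:SufficientHarmonic} with $\gamma = \SFF_0^\C$, so it suffices to show that $\nabla \SFF_0^\C$ lies in a subbundle of $\mathsf{E}_{1,7} \oplus \mathsf{E}_{1,1}$. The two hypotheses play complementary roles. Constant sectional curvature of $M$ makes the right-hand side of the Codazzi equation $(\nabla_X \SFF_0)(Y,Z) - (\nabla_Y \SFF_0)(X,Z) = (R(X,Y)Z)^\perp$ vanish, since $R(X,Y)Z$ is tangential to $\Sigma$ for $X, Y, Z \in T\Sigma$. Combined with the natural symmetry of $\SFF_0$ in its two slots, this forces $\nabla \SFF_0$ to be totally symmetric in its three tangent arguments; using tracelessness of $\SFF_0$ and metric compatibility of $\nabla$, it follows that $\nabla \SFF_0 \in \Gamma(\Sym_0^3 T^*\Sigma \otimes N\Sigma)$, whose complexification decomposes as $\mathsf{E}_{0,6} \otimes \mathsf{E}_{1,1} \cong \mathsf{E}_{1,7} \oplus \mathsf{E}_{1,5}$.

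The torsion-free or nearly-parallel hypothesis, via Remark~\ref{rmk:secfundformspace}, restricts $\SFF_0$ to the subbundle $\mathsf{V}_{1,5} \subset \Sym_0^2 T^*\Sigma \otimes N\Sigma$; since $\mathsf{V}_{1,5}$ is $\SO(4)$-invariant, it is parallel with respect to the connection induced on the $\SO(4)$-frame bundle $F \to \Sigma$, so $\nabla \SFF_0 \in \Gamma(T^*\Sigma \otimes \mathsf{V}_{1,5})$. Its complexification is $\mathsf{E}_{0,2} \otimes \mathsf{E}_{1,5} \cong \mathsf{E}_{1,7} \oplus \mathsf{E}_{1,5} \oplus \mathsf{E}_{1,3}$. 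Intersecting the two constraints inside the ambient $\mathsf{E}_{1,7} \oplus 2\mathsf{E}_{1,5} \oplus 2\mathsf{E}_{1,3} \oplus \mathsf{E}_{1,1}$, the $\mathsf{E}_{1,7}$ summand (unique in ambient) belongs to both, so lies in the intersection; the crux is to handle the $\mathsf{E}_{1,5}$ copies.

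The main step is to show via Schur's Lemma that the intersection reduces to $\mathsf{E}_{1,7}$. Decomposing $(T^*\Sigma \otimes \Sym_0^2 T^*\Sigma \otimes N\Sigma)^\C = (T^*\Sigma \otimes \mathsf{V}_{1,5})^\C \oplus (T^*\Sigma \otimes \mathsf{V}_{1,3})^\C$, let $\pi_{1,3}$ denote the projection onto the second factor. By Schur, $\pi_{1,3}$ annihilates the $\mathsf{E}_{1,7}$-summand of $(\Sym_0^3 T^*\Sigma \otimes N\Sigma)^\C$ (since $\mathsf{E}_{1,7}$ does not appear in $T^*\Sigma^\C \otimes \mathsf{V}_{1,3}^\C \cong \mathsf{E}_{1,5} \oplus \mathsf{E}_{1,3} \oplus \mathsf{E}_{1,1}$) and is either zero or an isomorphism on $\mathsf{E}_{1,5}$. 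To certify it is an isomorphism, I will exhibit a single totally-symmetric traceless tensor whose Remark~\ref{rmk:secfundformspace} symmetry fails: in an $\SO(4)$-frame, take $\omega^a_{ijk} = \delta^a_4 \bigl(e^1_i e^1_j e^1_k - \tfrac{1}{5}(\delta_{ij} e^1_k + \delta_{jk} e^1_i + \delta_{ki} e^1_j)\bigr)$, and check via the structure constants $\epsilon_{ijk}$ that the Remark-relation $(\omega_1)^4_{33} = -(\omega_1)^5_{23} + (\omega_1)^6_{13}$ is violated ($-\tfrac{1}{5} \neq 0$); since $\omega$ cannot be purely in the $\mathsf{E}_{1,7}$-subrep (else $\pi_{1,3}(\omega) = 0$), the $\mathsf{E}_{1,5}$-copy of $(\Sym_0^3 T^*\Sigma \otimes N\Sigma)^\C$ has nonzero image under $\pi_{1,3}$, hence is disjoint from $(T^*\Sigma \otimes \mathsf{V}_{1,5})^\C$. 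The intersection is therefore $\mathsf{E}_{1,7} \subset \mathsf{E}_{1,7} \oplus \mathsf{E}_{1,1}$, and Proposition~\ref{prop:SufficientHarmonic} concludes $\Dirac \SFF_0^\C = 0$. The main obstacle is this final representation-theoretic identification, which requires a careful unpacking of the isomorphism of Corollary~\ref{cor:TwistedSpinorID} combined with the explicit $\G_2$-structure constants to verify the non-alignment of the two $\mathsf{E}_{1,5}$ copies.
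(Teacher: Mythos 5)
Your proposal is correct and follows essentially the same route as the paper: Remark~\ref{rmk:secfundformspace} places $\SFF_0$ in $\mathsf{V}_{1,5}$ so that $\nabla\SFF_0$ lies in $T^*\Sigma\otimes\mathsf{V}_{1,5}$, the Codazzi equation with constant curvature places it in $\Sym^3_0(T^*\Sigma)\otimes N\Sigma$, the intersection of these subbundles is the $\mathsf{E}_{1,7}$-isotypic piece, and Proposition~\ref{prop:SufficientHarmonic} finishes the argument. The only difference is that where the paper asserts the intersection computation as a ``direct calculation,'' you actually supply it, via Schur's Lemma together with an explicit witness tensor violating the cross-product symmetry (and your index relation $(\omega_1)^4_{33}=-(\omega_1)^5_{23}+(\omega_1)^6_{13}$ and the value $-\tfrac{1}{5}$ check out).
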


\begin{proof} By Remark \ref{rmk:secfundformspace}, $\SFF$ is a section of $\mathsf{V}_{1,5} \subset N \Sigma \otimes \mathrm{Sym}^2_0 (T^* \Sigma)$. Therefore, $\nabla^{\mathrm{LC}} (\SFF)$ is a section of the subbundle $\mathsf{V}_{1,5} \otimes T^* \Sigma \subset N \Sigma \otimes \mathrm{Sym}^2_0 (T^* \Sigma) \otimes T^* \Sigma.$ Abstractly, this subbundle is isomorphic to $\mathsf{V}_{1,7} \oplus \mathsf{V}_{1,5} \oplus \mathsf{V}_{1,3}.$ On the other hand, since $M$ has constant sectional curvature, the Codazzi equation implies that $\nabla^{\mathrm{LC}}(\SFF)$ is a section of $N \Sigma \otimes \mathrm{Sym}^3_0 (T^* \Sigma)$.  Direct calculation yields
	\begin{equation*}
		( \mathsf{V}_{1,5} \otimes T^* \Sigma ) \cap (N \Sigma \otimes \mathrm{Sym}^3_0 (T^* \Sigma)) \cong \mathsf{V}_{1,7},
	\end{equation*}
where the intersection is taken inside $N \Sigma \otimes T^* \Sigma \otimes T^* \Sigma \otimes T^* \Sigma.$ Therefore, the complexification $\nabla(\SFF^\C)$ is  section of  $\mathsf{E}_{1,7}$.  Applying Proposition \ref{prop:SufficientHarmonic}, we conclude that $\Dirac(\SFF^{\C}) = 0$.
\end{proof}

\bibliographystyle{plain}
\bibliography{SHDRef}

\Addresses
 
\end{document}